  \providecommand\BibTeX{{%
    \normalfont B\kern-0.5em{\scshape i\kern-0.25em b}\kern-0.8em\TeX}}}
\newtheorem{remark}{remark}
\begin{document}

\title{Tight Bounds for the Maximum Distance Over a Polytope to a Given Point}

\author{Marius Costandin}
\email{costandinmarius@gmail.com}
\affiliation{ \institution{General Digits} \country{Romania}}

\begin{abstract}
 In this paper we study the problem of maximizing the distance to a given point  $C_0$ over a polytope $\mathcal{P}$. Assuming that the polytope is circumscribed by a known ball we construct an intersection of balls which preserves the vertices of the polytope on the boundary of this ball, and show that the intersection of balls approximates the polytope arbitrarily well. Then, we use some known results regarding the maximization of distances to a given point over an intersection of balls to create a new polytope which preserves the maximizers to the original problem.  Next, a new intersection of balls is obtained in a similar fashion, and as such, after a finite number of iterations, we end up with an intersection of balls over which we can maximize the distance to the given point. The obtained distance is shown to be a non trivial upper bound to the original distance and it is computed as a fixed point of a certain univariate function. Tests are made with maximizing the distance to a random point over the unit hypercube up to dimension $n = 100$. Several detailed 2-d examples are also shown. As future work, given a polytope $\mathcal{P}$ and a ball which includes it, we investigate a method for obtaining a sequence of balls with decreasing radius which include the given polytope.
\end{abstract}

%

\keywords{non-convex optimization, NP-Hard, quadratic optimization}


\maketitle

\section{Introduction}

In modern times, the systematic investigation of the geometry of the intersection of congruent balls (that is balls with equal radius) was started with the paper \cite{ballPoly1}. There are three books that survey some particular parts of the literature dealing with such intersections: \cite{bookballPoly1},  \cite{bookballPoly2}, and \cite{bookballPoly3}. For more general references, perhaps it is worth choosing from there.

In this paper relying on some results from literature \cite{funcos1} related to intersection of balls, we shall study the problem of maximizing the distance to a given point $C_0$ over a polytope. Both problems are known to be NP-Hard in general but maximizing the distance over an intersection of balls allows a polynomial algorithm for some particular classes, as shown in \cite{funcos1} namely for the cases where $C_0$, the given point, is outside the convex hull of the balls centers. 

 Very shortly, we show in the following that the Subset Sum Problem can be written as such a distance maximization problem over a polytope, making this problem NP-Hard.

Indeed briefly, as presented in \cite{funcos1} let $n \in \mathbb{N}$ and consider $S \in \mathbb{R}^n$ and $T \in \mathbb{R}$. The associated subset sum problem, SSP(S,T) asks if exists $x \in \{0,1\}^{n}$ such that $x^T\cdot S = T$. For this, similar to \cite{sahni}, consider the optimization problem for $\beta > 0$:
\begin{align}\label{E3.13a}
& \max x^T\cdot(x - 1_{n \times 1}) + \beta \cdot S^T \cdot x \nonumber \\
& \hspace{0.5cm} \text{s.t} \ \ \   x \in \begin{cases} S^T\cdot x \leq T\\
0 \leq x_i \leq 1 \hspace{0.3cm} \forall i \in \{1, \hdots, n\}
\end{cases} 
\end{align} Let the feasible set be denoted by $\mathcal{P} = \{x \in \mathbb{R}^n| S^T\cdot x \leq T, 
0 \leq x_i \leq 1 \hspace{0.3cm} \forall i \in \{1, \hdots, n\} \}$. 

\begin{remark}\label{R1}
It is easy to see that the objective function is always smaller than or equal to $\beta \cdot T$. In fact the objective function reaches the value $\beta \cdot T$ if and only if the SSP(S,T) has a solution. 
\end{remark}

Note that the objective function can be rewritten as
\begin{align}\label{E2b}
&x^T\cdot x + \left(\beta \cdot S - 1_{n \times 1} \right)^T \cdot x = \nonumber \\
&= \left\| x - \frac{1_{n\times 1} - \beta \cdot S}{2} \right\|^2 - \left\| \frac{1_{n\times 1} - \beta \cdot S}{2} \right\|^2 \nonumber \\
& = \|x - C_0\|^2 - \|C_0\|^2
\end{align} with obvious definition for $C_0$. Since $C_0$ does not depend on $x$, we shall consider the optimization problem:
\begin{align}\label{E3a}
\max_{x \in \mathcal{P}} \|x - C_0\|^2
\end{align}  Using Remark \ref{R1}, note that the SSP has a solution iff (\ref{E2b}) is zero, that is the mximum distance in (\ref{E3a}) is $\|C_0\|^2$. The problem (\ref{E3a}) is a distance maximization over a polytope.  Indeed $\mathcal{P}$ is the intersection of the unit hypercube with the halfspace $\{ x | S^T\cdot x \leq T\}$. Any maximizer shall be located in a corner of the polytope $\mathcal{P}$. 

It is worth mentioning that throughout this paper we shall denote by $\mathcal{B}(C,R)$ the open ball centered at $C\in \mathbb{R}^n$ with radius $R > 0$ and with $\bar{\mathcal{B}}(C,R)$ the closed ball centered at $C\in \mathbb{R}^n$ with radius $R > 0$. We also denote by $1_{n\times 1}$ the vector in $\mathbb{R}^n$ where all entries are $1$ and sometimes we refer to $\mathbb{R}^n$ as $\mathbb{R}^{n \times 1}$ for $n \in \mathbb{N}$. 

This paper shall begin with the study the problem of maximizing the distance to a given point $C_0 \in \mathbb{R}^n$ over a polytope $\mathcal{P}$. 
\begin{align}\label{E1.1}
\max_{x \in \mathcal{P}} \|x- C_0\|
\end{align}  

For this, in \cite{funcos1} the authors considered a similar problem: the maximization of the distance to a given fixed point $C_0 \in \mathbb{R}^n$ over an intersection of balls with arbitrary radii.

 Let $m>n \in \mathbb{N}$ and $C_k \in \mathbb{R}^n$ for $k \in \{1, \hdots, m\}$ such that any facet of their convex hull does not contain more than $n$ points. 
For a fixed $C_0 \in \mathbb{R}^n$ and $r > 0$ consider 
\begin{align}\label{E1}
\mathcal{Q} &= \bigcap_{k = 1}^m \bar{\mathcal{B}}(C_k, r_k) \hspace{0.5cm} h(x) = \max_{k \in \{1, \hdots. m\}} \|x - C_k\|^2 - r_k^2 \nonumber \\
& g(x) =  \|x - C_0\|^2 \hspace{0.5cm} \mathcal{H}^{\star} = \mathop{\text{argmin}}_{h(x) \leq 1} h(x) - g(x)  \nonumber \\
& \mathcal{P}_{R^2} = \left\{ x \in \mathbb{R}^n \biggr| \max_{k \in \{1, \hdots. m\}} h(x) - g(x) \leq -R^2 \right\} 
\end{align} where $R>0$ and $\bar{\mathcal{B}}(y,R) = \{x \in \mathbb{R}^n | \|x - y\| \leq R \}$ denotes the closed ball of center $y$ and radius $R$. 

The problem studied in \cite{funcos1}, \cite{funcos2} is
\begin{align}\label{E2}
\max_{x \in \mathcal{Q}} \|x - C_0\|
\end{align}
The problem (\ref{E2}) is NP-Hard in general. Noting that $h(x) - g(x)$ is a piecewise linear function, follows that finding an element in $\mathcal{H}^{\star}$ is a convex optimization problem. 

The following results from \cite{funcos1} are reiterated:
\begin{enumerate}
\item The set $\mathcal{Q} = \{x | h(x) \leq 0\} \subseteq \mathcal{P}_{0^2}$

\item If $C_0 \in \text{int}(\text{conv} \{C_1, \hdots, C_m\})$ and the radii are equal then the set $\mathcal{H}^{\star}$ has exactly one element $x^{\star}$. This does not depend on the choice of $C_0$ and is the center of the minimum enclosing ball (MEB) of the points $C_1, \hdots, C_m$, see Theorem 2 in \cite{funcos1}. In this case $\mathcal{H}^{\star} \subseteq \mathcal{Q} $ and exists $\underline{R} > 0$ such that $\mathcal{H^{\star}} = \mathcal{P}_{\underline{R}^2}$. The Theorem 1 in  \cite{funcos1} states that
\begin{align}\label{E3}
\max_{x \in \mathcal{Q}} \|x - C_0\| = \min \{R > 0 | \mathcal{P}_{R^2} \subseteq \mathcal{Q}\}
\end{align} Basically, this means that as $R$ increases from $0$ to $\underline{R}$ the set $\mathcal{P}_{R^2}$ evolves from initially containing $Q$ to being included in $Q$. The parameter $R$ for which it first enters $Q$, is actually the maximum distance from $C_0$ to a point in $Q$. The extreme points will be the vertices of the polytope $\mathcal{P}_{R^2}$ last to enter the set $Q$, hence finitely many.  See Figure \ref{fig1} for a graphical representation of this case.

\item If the radii are equal, then from $C_0 \not\in \text{conv} \{C_1, \hdots, C_m\}$ follows that the set $\mathcal{P}_{R^2}$ is unbounded, for any $R \geq 0$. The Theorem 1 in \cite{funcos1} states that in this case
\begin{align}
\max_{x \in \mathcal{Q}} \|x - C_0\| = \max \{R > 0 | \mathcal{Q} \cap \mathcal{P}_{R^2} \neq \emptyset\}
\end{align}  Because $\mathcal{Q}$ is bounded and although unbounded $\mathcal{P}_{R^2}$ is shrinking as $R$ increases (being the level sets of $h(x) - g(x)$ one has $\mathcal{P}_{R_1^2} \subseteq \mathcal{P}_{R_2^2}$ for $R_1 \geq R_2$), follows that exists $R_0$ such that $\mathcal{P}_{R^2} \cap \mathcal{Q} = \emptyset$ for all $R > R_0$. Therefore the set $\mathcal{P}_{R^2}$ evolves from initially containing $\mathcal{Q}$ for $R = 0$ to not having common elements for $R > R_0$. The largest parameter $R$ for which the set $\mathcal{P}_{R^2}$ has common elements to $\mathcal{Q}$ is actually the maximum distance from $C_0$ to a point in $Q$. In this case it is proven that there is always an unique extreme point, see \cite{funcos1}. For this case it is possible to compute in polynomial time the maximum distance and the maximizer, as showed in \cite{funcos1}.

\item Finally, if $C_0 \in \partial \text{conv}(C_1, \hdots, C_m)$ then from \cite{funcos1} one has:
\begin{align}
\max_{x \in \mathcal{Q}} \|x - C_0\| & = \underline{R} = \max \{R > 0 | \mathcal{P}_{R^2} \neq \emptyset\} \nonumber \\
& = \|y - C_0\| \hspace{0.3cm} \forall y \in \partial \mathcal{Q} \cap \mathcal{H}^{\star}
\end{align} For this case it is proven in Theorem 1 from \cite{funcos2}, that if $\mathcal{Q} \subseteq \text{int}(\text{conv}(C_1, \hdots, C_m))$ and $C_0$ does not belong to an edge of any sort (an intersection of facets), then the maximizer is unique and is a vertex of $\mathcal{Q}$. 
\end{enumerate} 

\begin{figure}[h]
  \includegraphics[width = 8cm]{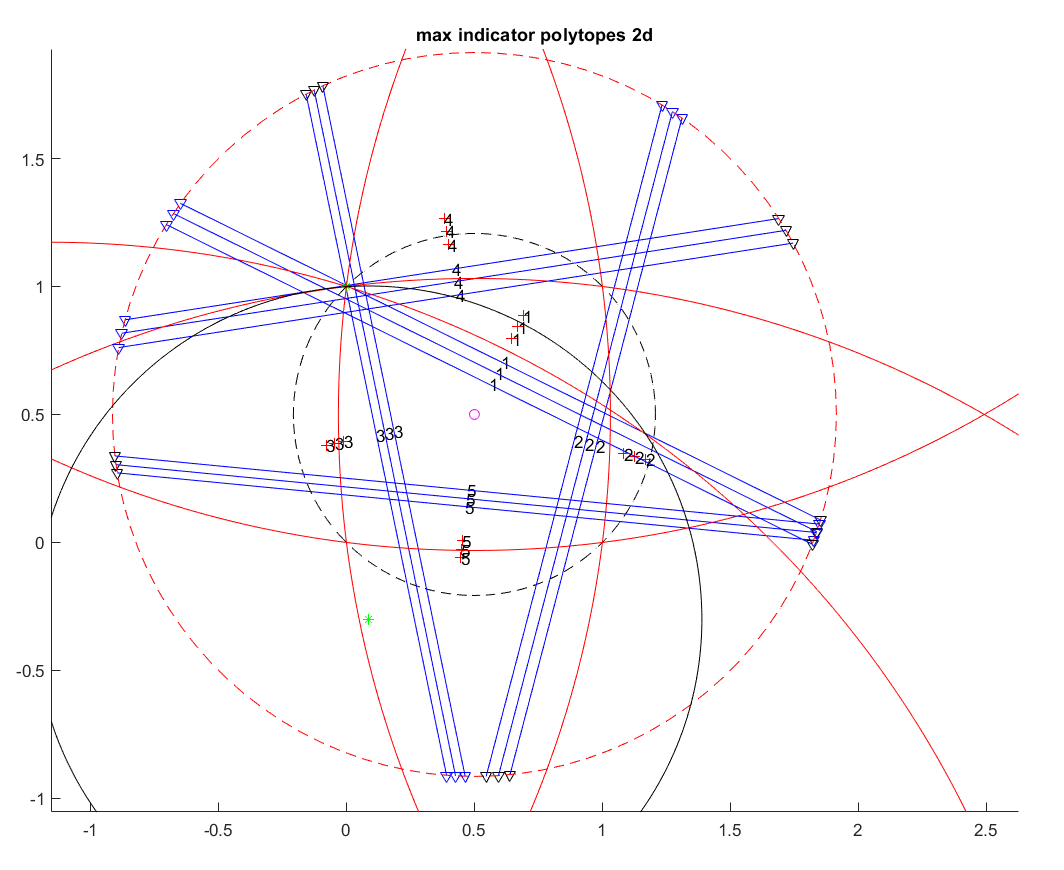}
  \caption{ The intersection of balls with red, the max indicator polytopes with blue: three instances of the family, $C_0$ is the green star and the identified maximum distance is given by the black circle. The last vertex to enter the intersection of balls is the farthest from $C_0$}
  \label{fig1}
\end{figure}

If the radii of the balls are equal, from \cite{funcos1}, follows that $\mathcal{H}^{\star} \subseteq \mathcal{Q} \iff C_0 \in \text{conv}(C_1, \hdots, C_m)$. However, even for arbitrary radii, if $C_0 \not\in \text{conv}(C_1, \hdots, C_m)$ then one can compute $\max_{x \in \mathcal{Q}} \|x - C_0\|$

%
%
%

\section{Maximizing the Distance over a Polytope}
Coming back to the problem of maximizing the distance to a given point $C_0 \in \mathbb{R}^n$ over a polytope $\mathcal{P}$. 
\begin{align}\label{E1.1}
\max_{x \in \mathcal{P}} \|x- C_0\|
\end{align}  we provide a polynomial algorithm which under the following assumptions:
\begin{enumerate}
\item A point $C_0 \neq C \in \mathbb{R}^n$ and $R>0$ are known such that 
\begin{enumerate}
\item \begin{align}
 \mathcal{P} \subseteq \mathcal{B}(C,R) 
\end{align}
\item 
\begin{align} \mathop{\text{argmax}}_{x \in \mathcal{P}} \|x- C_0\| \subseteq \partial \mathcal{B}(C,R)
\end{align}
\end{enumerate}
\item One has $C_0 - C$ is not orthogonal on any facet of $\mathcal{P}$
\end{enumerate}  is able (for some problems) to provide a maximizer and the maximum distance (i.e solve the problem), or in general provide an upper bound for the maximum distance.

For this we construct the intersection of balls $\mathcal{Q} \subseteq \mathcal{B}(C,R)$ such that 
\begin{align}
\mathop{\text{argmax}}_{x \in \mathcal{P}} \|x- C_0\| \leq \mathop{\text{argmax}}_{x \in \mathcal{Q}} \|x- C_0\|
\end{align} and $\mathcal{Q}$ preserves the corners of the polytope which are on the boundary of the ball $\mathcal{B}(C,R)$. 

Assume that
\begin{align} 
\mathcal{P} = \left\{ x \biggr| \begin{bmatrix} A_1^T\\ \vdots \\A_m^T \end{bmatrix}\cdot x +\begin{bmatrix} b_1\\ \vdots\\ b_m\end{bmatrix} \preceq  0_{m \times 1}\right\}
\end{align}

For each facet $i \in \{1, \hdots, m\}$ of the polytope, we construct a ball with center at $C_i$ and radius $r>0$ large enough, as such
\begin{align}
C_i = C + \rho_i\cdot \frac{A_i}{\|A_i\|}
\end{align} where $\rho_i \in \mathbb{R}$ and is computed as follows.
There are two methods we propose for this: one assures a constant radius for each ball, while the other method is simpler to work with in the following.
\begin{enumerate}
\item First method assures a constant radius for the balls: $r$. 
 We construct the point $P_i$ as the projection of the $C$ on the facet $i$ of $\mathcal{P}$. The value of $\|C - P_i\|$ can be computed as:
\begin{align}
\|C_i - P_i\|^2 = r^2 - \left( R^2 - \|C - P_i\|^2\right)
\end{align} and take $C_i$ accordingly. As such
\begin{align}
\rho_i - \frac{A_i^T\cdot C + b_i}{\|A_i\|} = \|C_i - P_i\|
\end{align}  Let 

\begin{align}
\mathcal{Q} = \bigcap_{k= 1}^m \mathcal{B}(C_i,r)
\end{align}
\item Second method: simpler to visualize even in higher dimensions, take $\rho_i$ constant, i.e. $\rho_i = \rho$. As such the radius of the balls are now varying from one to another. In order to compute the radius of the balls, now one first computes:
\begin{align}
\|C_i - P_i\| = \rho - \frac{A_i^T\cdot C + b_i}{\|A_i\|} 
\end{align} then 

\begin{align}
r^2_i  = \|C_i - P_i\|^2 - \left( R^2 - \|C - P_i\|^2\right)
\end{align}

\end{enumerate}

\begin{remark}\label{R1}
The above intersection of balls, assures that the boundary of each constructed ball leaves the same imprint on the boundary of the ball $\mathcal{B}(C,R)$ as the facet of $\mathcal{P}$. This is:
\begin{align}
\partial \mathcal{B}(C_i,r_i) \cap \partial \mathcal{B}(C,R) = \{x | A_i^T\cdot x + b_i = 0\} \cap  \partial \mathcal{B}(C,R)
\end{align} as such, $\mathcal{Q}$ is preserving the corners of $\mathcal{P}$ which are on the boundary of $\mathcal{B}(C,R)$. 
\end{remark}

Note that (for the case in which the radii are equal, but similar presentation can be made for the other case)
\begin{align}\label{E2.18}
\mathcal{P} \subseteq \mathcal{Q} \subseteq \bigcup_{x \in \mathcal{P}} \mathcal{B}(x, \epsilon)
\end{align} for all 
\begin{align} \label{E2.19}
\epsilon &\geq r - \|C_i - P_i\| = r - \sqrt{r^2 - (R^2 - \|C - P_i\|^2)} \nonumber \\
& = \frac{R^2 - \|C - P_i\|^2}{r + \sqrt{r^2 - (R^2 - \|C - P_i\|^2)}}
\end{align} 

Since $R^2 - \|C - P_i\|^2$ is fixed, the value of $\epsilon$ can be made arbitrarily small by increasing $r$. 

Because of the Remark \ref{R1} it is easy to see that 
\begin{align}
&\mathop{\text{argmax}}_{x \in \mathcal{P}} \|x - C_0\| \subseteq \{\text{corners of  } \nonumber \\
&\mathcal{P} \text{ on the boundary of }\partial \mathcal{B}(C,R)\} = \nonumber \\
& = \{\text{corners of  } \nonumber \\
&\mathcal{Q} \text{ on the boundary of }\partial \mathcal{B}(C,R)\}
\end{align} 

In the rest of the paper, unless otherwise explicitly stated, we shall consider the second method of constructing the balls, given a polytope inscribed in the ball $\mathcal{B}(C,R)$, i.e. $C_i = C + \rho \cdot \frac{A_i}{\|A_i\|}$. Note that as $\rho$ is increased the radii $r_i$ increase accordingly (hence the approximation can be made arbitrarily good for this method as well). 
 
\subsection{Algorithm Presentation}

Because (\ref{E2.18}) and (\ref{E2.19}) follows that exists $\rho$ large enough such that $C_0 \in \text{conv}(C_1, \hdots, C_m)$ and
\begin{align}
\mathop{\text{argmax}}_{x \in \mathcal{P}} \|x-C_0\| = \mathop{\text{argmax}}_{x \in \mathcal{Q}} \|x-C_0\|
\end{align}

For solving $\mathop{\text{argmax}}_{x \in \mathcal{Q}} \|x-C_0\|$ we apply Theorem 2 in \cite{funcos1} and construct the family of polytopes  $\mathcal{P}_{R^2}$. One particular member of this family,  namely $\mathcal{P}_{R_0^2}$ meets 
\begin{align}
&\mathop{\text{argmax}}_{x \in \mathcal{Q}} \|x - C_0\| \subseteq \{\text{corners of  } \nonumber \\
&\mathcal{P}_{R_0^2} \text{ on the boundary of }\partial \mathcal{B}(C,R)\}
\end{align} but $\mathcal{P}_{R_0^2} \subseteq \mathcal{Q} \subseteq \mathcal{B}(C,R)$. That is, the polytope $\mathcal{P}_{R_0^2}$ contains the maximizers of $\|x - C_0\|$ with $x \in \mathcal{Q}$. 

We propose now the following problem:
\begin{align}
	\max_{x \in \mathcal{P}_{R_0^2}} \|x - C_0\|
\end{align} It is easy to see that 
\begin{align}
	\mathop{\text{argmax}}_{x \in \mathcal{P}_{R_0^2}} \|x - C_0\| = \mathop{\text{argmax}}_{x \in \mathcal{Q}} \|x - C_0\| 
\end{align} Indeed, $\mathcal{P}_{R_0^2} \subseteq \mathcal{Q}$ and $\mathop{\text{argmax}}_{x \in \mathcal{Q}} \|x - C_0\|  \subseteq \{\text{corners of  }\mathcal{P}_{R_0^2}\}$

Since $\mathcal{P}_{R_0^2} \subseteq \mathcal{Q} \subseteq \mathcal{B}(C,R)$,  with the same method as above, obtain the intersection of balls $\mathcal{Q}^1_{R_0}$, from $\mathcal{P}_{R_0^2}^1 := \mathcal{P}_{R_0^2}$ in the same manner $\mathcal{Q}$ was obtained from $\mathcal{P}$, which as well preserves the corners of $\mathcal{P}_{R_0^2}$ on the boundary of $\mathcal{B}(C,R)$. The parameter $\rho$, with which the center of the balls forming $\mathcal{Q}_{R_0^2}^1$ are constructed, is maintained constant. 

The evolution of the this process is given by the following lemma.

\begin{lemma}\label{C1}
If $\rho$ is kept constant and $C_{k}^i - C$ is not co-linear with $C_0 - C$ for every $k \in \{1, \hdots, m\}$, than after a finite number of such steps, say $k$, one obtains that $C_0$ is no longer in the convex hull of the centers of balls forming the intersection of balls $\mathcal{Q}_{R_0^2}^k$. The number of these steps depend on the problem data.
\end{lemma}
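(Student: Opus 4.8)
\emph{Proof proposal.} The plan is to reduce the claim to a one-dimensional fact about how the outward facet normals of the successive polytopes evolve under the construction, together with a separating-hyperplane observation. Put $D:=C_0-C$ and $d:=\|D\|>0$. By construction the centers of the balls forming the intersection $\mathcal{Q}^i$ at step $i$ are exactly the points $C+\rho\,u$ as $u$ ranges over the unit outward facet normals $u^i_1,\dots,u^i_{m_i}$ of $\mathcal{P}^i$, so $\mathrm{conv}\{C^i_k\}=C+\rho\,\mathrm{conv}\{u^i_k\}$. Testing membership with the functional $x\mapsto\langle D,x-C\rangle$: if $C_0\in\mathrm{conv}\{C^i_k\}$ then $d^2=\langle D,C_0-C\rangle\le\rho\max_k\langle D,u^i_k\rangle$, i.e.\ $\gamma_i:=\max_k\langle D,u^i_k\rangle/d\ge d/\rho$; conversely, if $\gamma_i<d/\rho$ then $x\mapsto\langle D,x-C\rangle$ stays strictly below $d^2$ on $\mathrm{conv}\{C^i_k\}$ yet equals $d^2$ at $x=C_0$, so $C_0\notin\mathrm{conv}\{C^i_k\}$. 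Thus it is enough to show that $\gamma_i$ — the largest cosine of an angle between a facet normal of $\mathcal{P}^i$ and $D$ — eventually drops below $d/\rho$. (Note $\rho>d$: at the outset $C_0\in\mathrm{conv}\{C^0_k\}\subseteq\bar{\mathcal{B}}(C,\rho)$, and $\|D\|=\rho$ would force $D$ to be a vertex $\rho\,u^0_k$, excluded by non-colinearity.)

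Next I would extract the update rule on normals. Expanding the definition of $\mathcal{P}_{R^2}$ in \eqref{E1} gives $h(x)-g(x)=\max_k\left(2\langle x,C_0-C^i_k\rangle+\|C^i_k\|^2-\|C_0\|^2-r_k^2\right)$, so $\mathcal{P}^{i+1}=\mathcal{P}_{R_0^2}$ is an intersection of half-spaces whose outward normals lie among the vectors $C_0-C^i_k=D-\rho\,u^i_k$. Hence every unit facet normal of $\mathcal{P}^{i+1}$ equals $\Phi(u^i_k):=(D-\rho u^i_k)/\|D-\rho u^i_k\|$ for some $k$ (non-colinearity makes $D-\rho u^i_k\ne 0$ and is preserved by $\Phi$). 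Let $\theta\in(0,\pi)$ be the angle between a unit normal $u$ and $D$; decomposing $D-\rho u$ into its components along and orthogonal to $D$ gives, for the angle $\theta'$ of $\Phi(u)$, the identity $\cot\theta'=\dfrac{d}{\rho\sin\theta}-\cot\theta$, and applying it twice yields $\cot\theta''-\cot\theta=\dfrac{d}{\rho\sin\theta}\left(\dfrac{\|D-\rho u\|}{\rho}-1\right)$, whose sign equals that of $\dfrac{d}{2\rho}-\cos\theta$. Since $\rho>d$, the scalar map $f$ sending $\cos\theta$ to $\cos\theta'$ is a strictly decreasing continuous bijection of $[-1,1]$ with $f(-1)=1$ and $f(1)=-1$, so $f^2$ is a strictly increasing bijection; the last identity says precisely that $f^2$ has $c^\star:=d/(2\rho)\in(0,1)$ as a fixed point and satisfies $f^2(c)<c$ for every $c\in(c^\star,1)$ — i.e.\ $f^2$ drives every value strictly toward $c^\star$.

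To conclude, observe that since each facet normal of $\mathcal{P}^{i+1}$ is a $\Phi$-image of one of $\mathcal{P}^i$ and $f$ is decreasing, $\gamma_{i+1}\le f(\min_k\cos\theta^i_k)$ and $\min_k\cos\theta^{i+1}_k\ge f(\gamma_i)$, hence $\gamma_{i+2}\le f^2(\gamma_i)$; monotonicity of $f^2$ then gives $\gamma_{2j}\le f^{2j}(\gamma_0)$ for all $j$. Suppose, for contradiction, that $C_0$ remained in the convex hull of the centers at every step. The first paragraph gives $\gamma_{2j}\ge d/\rho=2c^\star>c^\star$ for all $j$, while $\gamma_0<1$ by non-colinearity, so $\gamma_0\in(c^\star,1)$ and $f^{2j}(\gamma_0)$ is a strictly decreasing sequence in $(c^\star,1)$; being bounded below by $c^\star$ it converges to a fixed point of $f^2$ in $[c^\star,1)$, which (as $f^2$ has no fixed point in $(c^\star,1)$) must be $c^\star$. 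For $j$ large this forces $\gamma_{2j}<d/\rho$, contradicting $\gamma_{2j}\ge d/\rho$. Hence after finitely many steps $C_0$ is no longer in the convex hull of the ball centers, as claimed.

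The step I expect to be the main obstacle is the middle one: reading off the map $\Phi$ from the definition of $\mathcal{P}_{R_0^2}$ — in particular verifying that the facets that actually survive form a subset of the hyperplanes with normals $D-\rho u^i_k$, so that passing to the maximal cosine is legitimate — and then pinning down the sign of the two-step displacement. The cotangent identity makes the latter transparent, but one has to be careful that the non-colinearity hypothesis is exactly what keeps every angle strictly inside $(0,\pi)$, so that $\cos$ and $\cot$ are strictly monotone and $\Phi$ is well defined at every iteration; by comparison, the separating-functional reduction and the monotone-convergence bookkeeping for $f^2$ are routine.
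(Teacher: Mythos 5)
Your reduction to a scalar dynamical system on cosines, and the clean separation criterion ($C_0\in\mathrm{conv}\{C^i_k\}$ forces $\max_k\cos\angle(C^i_k-C,\,C_0-C)\ge d/\rho$, so it suffices to drive every such cosine below $d/\rho$) is exactly the right skeleton, and it is in fact tidier than the paper's own version, which only gestures at why a small cosine expels $C_0$ from the hull. The problem is the update map. You take the new center directions to be $\Phi(u^i_k)=(D-\rho u^i_k)/\|D-\rho u^i_k\|$, i.e.\ proportional to $C_0-C^i_k$. But the ball that locally reproduces a half-space $\{A^T x+b\le 0\}$ inside $\mathcal{B}(C,R)$ must have its center on the \emph{feasible} side of that facet (otherwise its boundary curves the wrong way and the intersection of balls cannot contain the polytope), so the center sits in the direction of the \emph{inward} normal. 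Since the facet of $\mathcal{P}_{R_0^2}$ attached to ball $k$ reads $2(C_0-C^i_k)^T x+\mathrm{const}\le -R_0^2$, the new center direction from $C$ is $-(C_0-C^i_k)=C^i_k-C_0$ — which is precisely the formula $C^{i+1}_k=C+\rho_k(C^i_k-C_0)$ the paper uses in its own proof. (The paper's sign conventions for $\rho_i$ in $C_i=C+\rho_i A_i/\|A_i\|$ are admittedly inconsistent with its own formula $\|C_i-P_i\|=\rho_i-(A_i^TC+b_i)/\|A_i\|$, so your literal reading is understandable, but the geometry and the paper's proof both fix the sign the other way.)

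The consequence is that your scalar map is $\tilde f(c)=(d-\rho c)/\sqrt{\rho^2+d^2-2\rho d c}$, the \emph{negative} of the true map $f(c)=(\rho c-d)/\sqrt{\rho^2+d^2-2\rho d c}$. Your analysis of $\tilde f$ is internally correct (indeed $d/(2\rho)$ is a fixed point of $\tilde f$ itself, not only of $\tilde f^2$), but it describes the antipodal iteration, not the one the construction produces. The true map $f$ is strictly increasing on $[-1,1]$ with $f(\pm1)=\pm1$ as its only fixed points and $f(c)<c$ on $(-1,1)$ (the value $d/(2\rho)$ you find is a spurious root introduced by squaring: $f(d/(2\rho))=-d/(2\rho)$). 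So under the correct dynamics each cosine decreases monotonically toward $-1$ in a \emph{single} step, $\gamma_{i+1}\le f(\gamma_i)$ by monotonicity of $f$ and the subset property of surviving facets, and $f^{\,i}(\gamma_0)\downarrow -1<d/\rho$ gives the contradiction directly — the entire two-step $f^2$ apparatus is an artifact of the sign flip. To get finiteness with an explicit step count one still wants a uniform decrement, e.g.\ $f(c)-c\le (c-1)\,d/(\rho+d)$ for $c\in[0,1]$, which is the estimate (\ref{E63}) in the paper. As written, your argument proves the statement for the wrong iteration; the fix is to replace $\Phi$ by $\Psi(u)=(\rho u-D)/\|\rho u-D\|$ and rerun your (then simpler) monotonicity bookkeeping.
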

\begin{proof}


Note that for each generation, $i$, of intersection of balls $\mathcal{Q}_{R_0^2}^i$, the centers of the balls are lying on the boundary of $\mathcal{B}(C,\rho)$. As seen in Figure \ref{fig2} with green (a 2-d example), their convex hull contains the point $C_0$, depicted with a triangle. 

\begin{figure}[h]
  \includegraphics[width = 8cm]{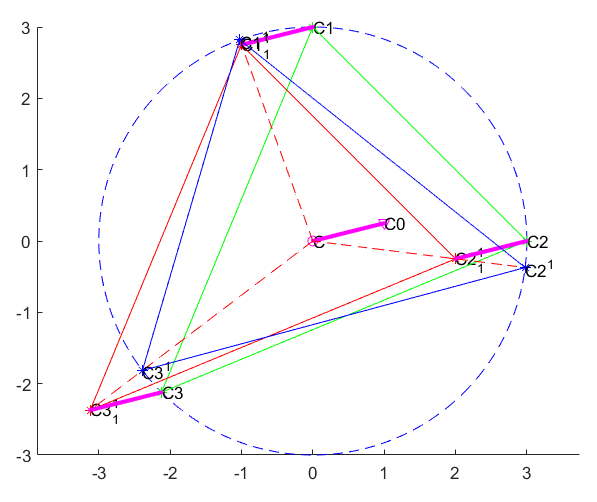}
  \caption{ An example: The initial convex hull with green. The translated convex hull with red. One of the facets approaches the point $C_0$. The adjusted convex hull with blue. Still has one facet closer to $C_0$ than in initial convex hull.}
  \label{fig2}
\end{figure}

Translating the convex hull of the center of balls forming $\mathcal{Q}_{R_0^2}^i$ by the vector $C - C_0$ one has at least one facet approaching the point $C_0$. See in Figure \ref{fig2} with red the translated original green convex hull. Since 
\begin{align}
C_{k}^{i+1} = C + \rho \cdot \frac{C_k^{i} + (C - C_0) - C}{\|C_k^{i} + (C - C_0) - C\|} 
\end{align} follows that the centers of balls in the next intersection of balls $\mathcal{Q}_{R_0^2}^{i+1}$ are obtained by projecting on the sphere $\partial \mathcal{B}(C,\rho)$ the translations of the centers of balls of the previous intersection of balls $\mathcal{Q}_{R_0^2}^i$. In the Figure \ref{fig2} these are the blue points forming the blue convex hull. Note that still one facet is closer to $C_0$ than in the initial green convex hull. The lemma states that after several such iterations, $C_0$ remains outside of the next convex hull.

Let $\theta_{k,i} = \angle(C_{0}-C, C_k^i-C) \Rightarrow \cos(\theta_{k,i}) =  \frac{(C_0 - C)^T\cdot (C_{k}^i - C)}{\|C_0 - C \|\cdot \|C_{k}^i - C\|}$ 
then for $C_{k}^{i+1} = C + \rho_k \cdot (C_k^i - C_0)$ with appropriate definition for the scalar $\rho_k$, one has
\begin{align}
\cos(\theta_{k,i+1}) &= \angle(C_{0}-C, C_{k}^{i+1}-C) = \frac{(C_{0}-C)^T\cdot ( C_{k}^{i+1}-C)}{\|C_{0}-C\| \cdot \|C_{k}^{i+1}-C\| } \nonumber \\
& = \rho_k\cdot \frac{(C-C_0)^T\cdot (C_k^i - C)}{\|C_{0}-C\| \cdot \|C_{k}^{i+1}-C\| } - \rho_k\cdot  \frac{\|C - C_0\|^2}{\|C_{0}-C\| \cdot \|C_{k}^{i+1}-C\| }  
\end{align} Since $\|C_{k}^{i+1}-C\| = \rho_k \cdot \|C_k^i - C_0\| = \rho_k \cdot \|C_k^i - C\|\cdot \frac{\|C_k^i - C_0\|}{\|C_k^i - C\|}$ follows that

\begin{align}
&\cos(\theta_{k,i+1}) = \cos(\theta_k^i) \cdot \frac{\|C_k^i - C\|}{\|C_k^i - C_0\|} - \frac{\|C - C_0\|}{ \|C_{k}^{i}-C_0\| } \nonumber \\
& = \cos(\theta_k^i) \cdot \frac{\|C_k^i - C\| - \|C_0 - C\| + \|C_0 - C\|}{\|(C_k^i -C) - (C_0- C)\|} - \frac{\|C - C_0\|}{ \|C_{k}^{i}-C_0\| } \nonumber \\
& = \cos(\theta_{k,i}) \cdot \frac{\|C_k^i - C\| - \|C_0 - C\| }{\|(C_k^i -C) - (C_0- C)\|} + (\cos(\theta_{k,i}) - 1) \cdot \frac{\|C - C_0\|}{ \|C_{k}^{i}-C_0\| }
\end{align}

 Assuming $\|C_{k}^i -C\| > \|C_0 - C\|$ and $0\leq \cos(\theta_{k,i}) < 1$, that is $C_k^i -C \neq (C_0-C) \cdot \alpha$ for any $\alpha \in \mathbb{R}$, one has  

\begin{align}\label{E63}
\cos(\theta_{k,i+1}) - \cos(\theta_{k,i}) &< (\cos(\theta_{k,i}) - 1) \cdot \frac{\|C - C_0\|}{ \|C_{k}^{i}-C_0\| } \nonumber \\
& < (\cos(\theta_{k,0}) - 1) \cdot \frac{\|C - C_0\|}{ V}
\end{align} where $\|C_{k}^{i}-C_0\| \leq V$, hence the cosine decreases at every iteration eventually reaching a small enough value to assert that $C_0$ is outside the convex hull of the balls centers, since it form a large angle with the vertices of the convex hull facet.

\end{proof}

Note the following inclusions for $i \leq k$
\begin{align}
\mathcal{P}_{R_0^2}^k \subseteq \mathcal{Q}_{R_0^2}^k \subseteq \bigcup_{x \in \mathcal{P}_{R_0^2}^k} \mathcal{B}(x,\epsilon)
\end{align} and as such for all $i \leq k - 1$
\begin{align}
\max_{x \in \mathcal{P}_{R_0^2}^i}\|x - C_0\| \leq \max_{x \in \mathcal{Q}_{R_0^2}^i} \|x - C_0\|= \max_{x \in \mathcal{P}_{R_0^2}^{i+1}} \|x - C_0\|
\end{align}

Finally, note that $\mathcal{Q}_{R_0^2}^i \subseteq \mathcal{B}(C,R)$. We can now give the main result of this paper:

\begin{corollary}\label{Co2.1}
For a given polytope $\mathcal{P}$, if a ball $\mathcal{B}(C,R)$ is known such that
\begin{enumerate}
\item  $\mathcal{P} \subseteq \mathcal{B}(C,R)$ 
\item  
\begin{align}
\mathcal{X}^{\star} = \mathop{\text{argmax}}_{x \in \mathcal{P}} \|x - C_0\| \subseteq \partial \mathcal{B}(C,R)
\end{align} i.e. the maximizers belongs to the boundary of the ball
\item A value $R_0 > 0$ is known such that $\|C_0 - x^{\star}\| = R_0$ 
\end{enumerate} then it is possible to obtain a set $\mathcal{Q}_{R_0^2}^k \subsetneq \mathcal{B}(C,R)$ such that one can solve in polynomial time 
\begin{align}
\{y^{\star}\} = \mathop{\text{argmax}}_{x \in \mathcal{Q}_{R_0^2}^k} \|x - C_0\|
\end{align}

 If $y^{\star} \in \partial \mathcal{B}(C,R)$ then $y^{\star} \in  \mathcal{X}^{\star}$ otherwise if $y^{\star} \in \mathcal{B}(C,R)$ then $\|y^{\star} - C_0\| > R_0$
\end{corollary}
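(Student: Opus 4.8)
The plan is to run the construction of the previous subsection once, pass to the family $\mathcal{P}_{R^2}$ of \cite{funcos1} via Theorems~1 and 2 there, iterate this pair of operations, use Lemma~\ref{C1} to stop after finitely many rounds, and then read off the dichotomy. The quantities to watch along the iteration are the value $v_i:=\max_{x\in\mathcal{P}_{R_0^2}^i}\|x-C_0\|$, the imprint $\mathcal{P}_{R_0^2}^i\cap\partial\mathcal{B}(C,R)$ and the maximizer set. First I would apply the second construction to $\mathcal{P}$ with $\rho$ chosen so large that, by \eqref{E2.18}--\eqref{E2.19}, $C_0\in\operatorname{conv}(C_1,\dots,C_m)$ and $\operatorname*{argmax}_{x\in\mathcal{P}}\|x-C_0\|=\operatorname*{argmax}_{x\in\mathcal{Q}}\|x-C_0\|$, and keep $\rho$ fixed thereafter. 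Hypothesis~(3) gives $R_0=\max_{x\in\mathcal{P}}\|x-C_0\|$, hence also $\max_{x\in\mathcal{Q}}\|x-C_0\|=R_0$, so the member $\mathcal{P}_{R_0^2}$ of \cite{funcos1} satisfies $\mathcal{P}_{R_0^2}\subseteq\mathcal{Q}\subseteq\mathcal{B}(C,R)$, $\operatorname*{argmax}_{x\in\mathcal{P}_{R_0^2}}\|x-C_0\|=\mathcal{X}^\star$ and $v_1=R_0$; by Remark~\ref{R1}, $\mathcal{Q}\cap\partial\mathcal{B}(C,R)=\mathcal{P}\cap\partial\mathcal{B}(C,R)$, so $\mathcal{P}_{R_0^2}\cap\partial\mathcal{B}(C,R)\subseteq\mathcal{P}$.

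Next I would set $\mathcal{P}_{R_0^2}^1:=\mathcal{P}_{R_0^2}$ and alternate $\mathcal{P}_{R_0^2}^i\mapsto\mathcal{Q}_{R_0^2}^i\mapsto\mathcal{P}_{R_0^2}^{i+1}$ with the frozen $\rho$, proving by induction: (a) the value is non-decreasing and $\mathcal{Q}_{R_0^2}^i\subseteq\mathcal{B}(C,R)$, i.e.\ the inclusion and value chains already displayed before the corollary; (b) $\mathcal{Q}_{R_0^2}^i\cap\partial\mathcal{B}(C,R)=\mathcal{P}_{R_0^2}^i\cap\partial\mathcal{B}(C,R)$ by Remark~\ref{R1}, while $\mathcal{P}_{R_0^2}^{i+1}\subseteq\mathcal{Q}_{R_0^2}^i$ forces the imprint to be non-increasing, so $\mathcal{P}_{R_0^2}^i\cap\partial\mathcal{B}(C,R)\subseteq\mathcal{P}$ for all $i$; (c) whenever $v_i=v_{i+1}$ one has $\operatorname*{argmax}_{x\in\mathcal{P}_{R_0^2}^i}\|x-C_0\|\subseteq\operatorname*{argmax}_{x\in\mathcal{Q}_{R_0^2}^i}\|x-C_0\|=\operatorname*{argmax}_{x\in\mathcal{P}_{R_0^2}^{i+1}}\|x-C_0\|$, so if the value stays equal to $R_0$ throughout then $\mathcal{X}^\star\subseteq\operatorname*{argmax}_{x\in\mathcal{P}_{R_0^2}^i}\|x-C_0\|$ for every $i$. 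Assumption~(2) says $C_0-C$ is orthogonal to no facet of $\mathcal{P}$, i.e.\ $C_i^1-C=\rho A_i/\|A_i\|$ is not co-linear with $C_0-C$, so the co-linearity hypothesis of Lemma~\ref{C1} holds initially and (with $\rho$ frozen) persists; hence Lemma~\ref{C1} produces a finite $k$ with $C_0\notin\operatorname{conv}$ of the centres of the balls forming $\mathcal{Q}_{R_0^2}^k$.

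For that $\mathcal{Q}_{R_0^2}^k$, the result recalled at the end of Section~1 (intersection of balls with arbitrary radii and $C_0$ outside the convex hull of the centres) gives a polynomial algorithm for $\max_{x\in\mathcal{Q}_{R_0^2}^k}\|x-C_0\|$ and for its unique maximizer $y^\star$, and $\mathcal{Q}_{R_0^2}^k\subsetneq\mathcal{B}(C,R)$. By (a), $\|y^\star-C_0\|=\max_{x\in\mathcal{Q}_{R_0^2}^k}\|x-C_0\|\ge\max_{x\in\mathcal{P}_{R_0^2}^k}\|x-C_0\|\ge v_1=R_0$. If $y^\star\in\partial\mathcal{B}(C,R)$, then $y^\star$ is a vertex of $\mathcal{Q}_{R_0^2}^k$ on $\partial\mathcal{B}(C,R)$, so by (b) $y^\star\in\mathcal{P}_{R_0^2}^k\cap\partial\mathcal{B}(C,R)\subseteq\mathcal{P}$, hence $\|y^\star-C_0\|\le R_0$; with the reverse inequality this gives $\|y^\star-C_0\|=R_0$, i.e.\ $y^\star\in\mathcal{X}^\star$. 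Otherwise $y^\star\in\mathcal{B}(C,R)$, and were $\|y^\star-C_0\|=R_0$ we would get $v_i=R_0$ for all $i$, so by (c) $\emptyset\neq\mathcal{X}^\star\subseteq\operatorname*{argmax}_{x\in\mathcal{Q}_{R_0^2}^k}\|x-C_0\|=\{y^\star\}$, whence $y^\star\in\mathcal{X}^\star\subseteq\partial\mathcal{B}(C,R)$, a contradiction; therefore $\|y^\star-C_0\|>R_0$.

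The crux is the bookkeeping in the second paragraph: that re-applying the ball construction and the $\mathcal{P}_{R^2}$-family step with the \emph{same} $\rho$ --- which need no longer be ``large enough'' for the shrunken polytopes $\mathcal{P}_{R_0^2}^i$ --- still preserves precisely (a)--(c), and that Lemma~\ref{C1} is applicable at every round (in particular that its co-linearity hypothesis is never violated). Granting these, the dichotomy drops out of the inclusions already on display together with the quoted facts of \cite{funcos1,funcos2}.
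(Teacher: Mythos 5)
Your proposal follows essentially the same route as the paper's proof: iterate $\mathcal{P}\to\mathcal{Q}\to\mathcal{P}_{R_0^2}^1\to\mathcal{Q}_{R_0^2}^1\to\cdots$, invoke Lemma~\ref{C1} to terminate with $C_0$ outside the convex hull of the centres, solve over $\mathcal{Q}_{R_0^2}^k$ via the result of \cite{funcos1}, and read the dichotomy off the chain of inclusions together with the boundary-imprint preservation of Remark~\ref{R1}. You in fact supply two details the paper leaves implicit --- the bookkeeping showing that $y^{\star}\in\partial\mathcal{B}(C,R)$ forces $y^{\star}\in\mathcal{P}$, and the contradiction argument establishing the \emph{strict} inequality $\|y^{\star}-C_0\|>R_0$ in the interior case --- so the proposal is, if anything, more complete than the paper's own argument.
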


\begin{proof}
Starting from $\mathcal{P}$ construct $\mathcal{Q}$ then $\mathcal{P}_{R_0^2}^1, \mathcal{Q}_{R_0^2}^1, \hdots$. The process ends, according to Lemma \ref{C1} with $\mathcal{Q}_{R_0^2}^k$ which is an intersection of balls with $C_0$ not in the convex hull of the balls centers. As such one can apply Theorem 2 in \cite{funcos1} to solve  $\{y^{\star}\} = \mathop{\text{argmax}}_{x \in \mathcal{Q}_{R_0^2}^k} \|x - C_0\|$. It is known that the number of maximizers in this situation is one. Because of the construction, the following are known:
\begin{enumerate}
\item 
\begin{align}
\|y^{\star} - C_0\| &= \max_{x \in \mathcal{Q}_{R_0^2}^k}\|x - C_0\| \geq \hdots  \geq \nonumber \\
& \geq \max_{x \in \mathcal{P}}\|x - C_0\| = \|x^{\star} - C_0\|
\end{align} for all $x^{\star} \in \mathcal{X}^{\star}$
\item if $y^{\star} \in \partial \mathcal{B}(C,R)$ then $y^{\star} \in \mathcal{P}$ hence $y^{\star} \in \mathcal{X}^{\star}$. 
\end{enumerate}
\end{proof}

In general, the radius $R_0$ is not known apriori. However, it is interesting to note that the construction of the centers of the intersection of balls does not need information about $R_0$. That information is only needed for computing the radius of the balls. As such, the centers of $\mathcal{Q}_{R_0^2}^k$ can be obtained, without knowing $R_0$. 

 The above Corollary \ref{Co2.1} can still be used in practice assuming that one knows a range for $R_0$ as $R_0 \in [\underline{R}_0, \overline{R}_0]$. 

We finally, give our last theorem of this paper. It computs the upper bound as a fixed point to a certain univariate function:

\begin{theorem}
Let $R_1 \in [\underline{R}_0, \overline{R}_0]$ such that 

\begin{align}\label{E2.32}
R_1 = \max_{x \in \mathcal{Q}_{R_1^2}^k} \|x - C_0\|
\end{align} Then 

\begin{align}
R_0 = \max_{x \in \mathcal{P}} \|x - C_0\| \leq R_1
\end{align}
\end{theorem}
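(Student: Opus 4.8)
The plan is to treat the right-hand side of~(\ref{E2.32}) as a univariate function $F(R):=\max_{x\in\mathcal{Q}_{R^2}^k}\|x-C_0\|$ and to reduce the theorem to the single estimate
\begin{align}
F(R)\;\ge\;R_0\qquad\text{for every }R\text{ with }0<R\le R_0 .
\end{align}
Granting this, the theorem is immediate: if $R_1>R_0$ there is nothing to prove, while if $R_1\le R_0$ the fixed-point identity~(\ref{E2.32}) yields $R_1=F(R_1)\ge R_0$; either way $R_0\le R_1$. For this reduction to make sense one must first check that $F$ is genuinely a function of $R$ alone. As already observed after Corollary~\ref{Co2.1}, the \emph{centres} of every ball produced by the construction --- those of $\mathcal{Q}$ and then of $\mathcal{Q}_{R^2}^1,\dots,\mathcal{Q}_{R^2}^k$ --- are built from facet normals and from the fixed data $C,\rho$ only: the facet normals of $\mathcal{P}_{R^2}^{i+1}$ are the $R$-independent directions $\pm(C_0-C_j^{i})$, and changing $R$ merely translates those facet hyperplanes in parallel. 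Hence the number $k$ of steps at which $C_0$ leaves the convex hull of the centres (Lemma~\ref{C1}) is the same for all $R\in[\underline{R}_0,\overline{R}_0]$, only the ball radii depend on $R$, and one may fix $\rho$ once and for all large enough that every radius occurring in the construction stays positive over this interval.

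The core of the proof will be the claim that \emph{a true maximiser survives every round of the construction provided the parameter does not exceed $R_0$}: fix $x^\star\in\mathcal{X}^\star$, so that $\|x^\star-C_0\|=R_0$, fix $R\in(0,R_0]$, and set $\mathcal{Q}_{R^2}^0:=\mathcal{Q}$; I claim $x^\star\in\mathcal{Q}_{R^2}^i$ for every $0\le i\le k$. The induction runs over the two alternating operations of the algorithm. The base case is $x^\star\in\mathcal{P}\subseteq\mathcal{Q}$, which is~(\ref{E2.18}). For the inductive step, suppose $x^\star\in\mathcal{Q}_{R^2}^i$ and write $h^{(i)}$ for the indicator $x\mapsto\max_j\bigl(\|x-C_j^{i}\|^2-(r_j^{i})^2\bigr)$ of this intersection of closed balls; membership gives $h^{(i)}(x^\star)\le 0$, whence
\begin{align}
h^{(i)}(x^\star)-\|x^\star-C_0\|^2\;\le\;0-R_0^2\;=\;-R_0^2\;\le\;-R^2 ,
\end{align}
and the last expression is precisely the inequality that defines $\mathcal{P}_{R^2}^{i+1}$ in~(\ref{E1}). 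Thus $x^\star\in\mathcal{P}_{R^2}^{i+1}$, and since the intersection of balls built from a polytope always contains that polytope (the level-$(i+1)$ analogue of~(\ref{E2.18})), we get $x^\star\in\mathcal{P}_{R^2}^{i+1}\subseteq\mathcal{Q}_{R^2}^{i+1}$, closing the induction. In particular $x^\star\in\mathcal{Q}_{R^2}^k$, so
\begin{align}
F(R)\;=\;\max_{x\in\mathcal{Q}_{R^2}^k}\|x-C_0\|\;\ge\;\|x^\star-C_0\|\;=\;R_0 ,
\end{align}
which is the estimate displayed above, and the theorem follows.

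This picture is consistent with Corollary~\ref{Co2.1}: at $R=R_0$ the case analysis there gives $F(R_0)=R_0$ exactly when the (unique) maximiser over $\mathcal{Q}_{R_0^2}^k$ lies on $\partial\mathcal{B}(C,R)$, i.e.\ exactly when the method is tight, so $R_0$ is itself a fixed point of $F$ precisely in the favourable case; otherwise $F(R_0)>R_0$ and the estimate above shows that $F$ has no fixed point in $(0,R_0]$ whatsoever, so any $R_1$ solving~(\ref{E2.32}) necessarily overshoots the true maximum.

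The step I expect to be the real obstacle is not the chain of set inclusions --- those follow verbatim from~(\ref{E2.18}) and the description of $\mathcal{P}_{R^2}$ in~(\ref{E1}) --- but making rigorous the two bookkeeping points that let $F(R)$ be defined on the whole interval $[\underline{R}_0,\overline{R}_0]$: that the ball centres, and hence the iteration count $k$, are genuinely independent of $R$, and that a single $\rho$ can be taken large enough to keep every radius in the construction positive uniformly in $R$, so that each $\mathcal{Q}_{R^2}^i$ is a legitimate nonempty intersection of closed balls on which its indicator function is nonpositive. Both are consequences of the constructions already set up, but they are exactly what licenses the clean one-line bound $F(R)\ge R_0$.
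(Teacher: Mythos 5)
Your proof is correct, but it takes a genuinely different route from the paper's. The paper argues by contradiction using two ingredients: (i) the monotonicity $\mathcal{Q}_{R_a^2}^{k}\subseteq\mathcal{Q}_{R_b^2}^{k}$ for $R_b\le R_a$, obtained by observing that the facets of $\mathcal{P}_{R^2}^{k}$ are parallel translates as $R$ varies, and (ii) the previously established chain $\max_{x\in\mathcal{Q}_{R_0^2}^k}\|x-C_0\|\ge\max_{x\in\mathcal{P}}\|x-C_0\|=R_0$; assuming $R_1<R_0$ it then deduces $R_1\ge\max_{x\in\mathcal{Q}_{R_0^2}^k}\|x-C_0\|\ge R_0$, a contradiction. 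You instead bypass the monotonicity lemma entirely and prove the pointwise statement $F(R)\ge R_0$ for every $R\in(0,R_0]$ by inducting a single fixed maximiser $x^\star$ through the alternating polytope/ball stages, using only $h^{(i)}(x^\star)\le 0$ and $\|x^\star-C_0\|^2=R_0^2\ge R^2$ together with the level-$i$ analogue of~(\ref{E2.18}); the theorem then falls out in one line. What your approach buys is self-containedness and a slightly stronger conclusion ($F$ has no fixed point strictly below $R_0$, and any fixed point in $(0,R_0]$ must equal $R_0$); what the paper's approach buys is that the same monotonicity machinery also powers its informal bisection procedure for locating $R_1$. Both arguments rest on the same unstated bookkeeping --- that the ball centres, and hence the step count $k$, do not depend on the parameter $R$ --- which the paper only gestures at ("the construction of the centers\ldots does not need information about $R_0$") and which you rightly flag as the point that needs to be made rigorous.
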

\begin{proof}
Assume that
\begin{align}
R_0 = \max_{x \in \mathcal{P}} \|x - C_0\| = \max_{x \in \mathcal{Q}_{R_0^2}^k} \|x - C_0\|
\end{align} whatever value $R_0$ might have. In this situation one can actually compute $R_0$. 

Before going further, note that from the definition of $\mathcal{P}_{R^2}^{k}$ one has that the facets of this polytope have the following equations, see \cite{funcos1}:
\begin{align}
(C_0 - C_i)^T\cdot x + \|C_i\|^2 - r_i^2 - \|C_0\|^2 \leq -R^2
\end{align} hence the following monotony property holds: $\mathcal{P}_{R_a^2}^{k} \subseteq \mathcal{P}_{R_b^2}^{k}$ for $R_b \leq R_a$. Since each facet of the smaller polytope is a parallel translation of the bigger one, follows that the same stays true for the associated intersection of balls i.e. $\mathcal{Q}_{R_a^2}^{k} \subseteq \mathcal{Q}_{R_b^2}^{k}$ for $R_b \leq R_a$.

Take, $R \in  [\underline{R}_0, \overline{R}_0]$ and since $C_0 \not\in \text{conv}(C_1, \hdots, C_m)$ one can compute $\max_{x \in \mathcal{Q}_{R^2}^k} \|x - C_0\|$.

\begin{enumerate} 
\item  If $R < R_0$ then $\mathcal{Q}_{R_0^2}^k \subseteq \mathcal{Q}_{R^2}^k$ hence $\max_{x \in \mathcal{Q}_{R^2}^k} \|x - C_0\| > R_0 > R$. Increase $R$. 
\item If $R > R_0$  then $\mathcal{Q}_{R^2}^k \subseteq \mathcal{Q}_{R_0^2}^k$ hence $\max_{x \in \mathcal{Q}_{R^2}^k} \|x - C_0\| < R_0 < R$. Decrease $R$. 
\item If $R = R_0$ one expects $R = \max_{x \in \mathcal{Q}_{R^2}^k} \|x - C_0\|$.
\end{enumerate} In this situation, as can be observed, one can compute $R_0$ simply as the element $R$ of $[\underline{R}_0, \overline{R}_0]$ which meets

\begin{align}
R = \max_{x \in \mathcal{Q}_{R^2}^k} \|x - C_0\|
\end{align}

On the other hand, assume that 
 \begin{align}
R_0 = \max_{x \in \mathcal{P}} \|x - C_0\| < \max_{x \in \mathcal{Q}_{R_0^2}^k} \|x - C_0\|
\end{align} for the real (unknown) value of $R_0$. Still in this situation, compute $R_1 \in [\underline{R}_0, \overline{R}_0]$ with
\begin{align}
R_1 = \max_{x \in \mathcal{Q}_{R_1^2}^k} \|x - C_0\|
\end{align} 

Assuming that $R_1 < R_0$ follows that $\mathcal{Q}_{R_0^2}^k \subseteq \mathcal{Q}_{R_1^2}^k$, therefore 
\begin{align}
R_1 = \max_{x \in \mathcal{Q}_{R_1^2}^k} \|x - C_0\| &\geq \max_{ x \in \mathcal{Q}_{R_0^2}^k} \|x - C_0\| > \nonumber \\
& > \max_{x \in \mathcal{P}} \|x - C_0\|  = R_0
\end{align} which is a contradiction. 
\end{proof}

\begin{remark}
Note that one can always obtain a lower bound on $\max_{x \in \mathcal{P}} \|x - C_0\|$ as $\|x_0 - C_0\|$ for any point $x_0 \in \mathcal{P}$. 

To make the lower bound tight, we propose  to solve the linear problem: 
\begin{align}
\mathop{\text{argmax  }}_{x \in \mathcal{P}} v^T\cdot (x - C_0)
\end{align} for $v = \frac{y - C_0}{\|y - C_0\|}$ with $y \in \mathop{\text{argmax  }}_{x \in \mathcal{Q}_{R_1}^k} \|x - C_0\|$ where $R_1$ is given by (\ref{E2.32}).

\end{remark}

\section{Numerical results}

We implemented some parts of the described method and tested it on the problem of maximizing the distance over the unit hypercube to a random point inside the unit hypercube. We chose this problem, because it allows us to test the quality of the returned solution since for this particular case, the maximizer can be computed in polynomial time by a known algorithm. 

In the Figure \ref{fig3} one sees a 2-d example. One can see with light blue the hypercube, i.e. the polytope $\mathcal{P}$, with grey the polytope $\mathcal{P}_{R_0^2}^1$ and with green the polytope $\mathcal{P}_{R_0^2}^k$.

\begin{figure}[h]
  \includegraphics[width = 8cm]{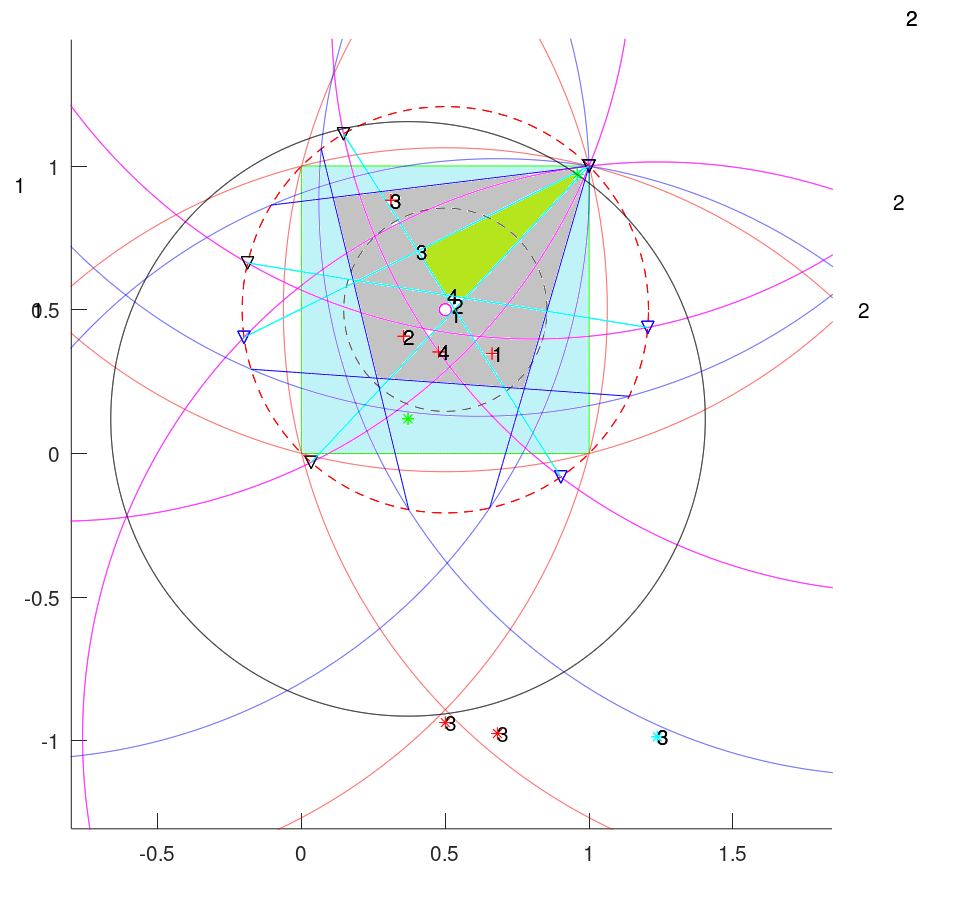}
  \caption{ With green one sees the last polytope and its associated intersection of balls. With green star the point $C_0$ and the black circle show the identified maximum distance. }
  \label{fig3}
\end{figure}

 In the Figure \ref{fig4} one sees another 2-d example. Here the original unit cube was randomly perturbed. One can see with green the perturbed hypercube, i.e. the polytope $\mathcal{P}$, with purple the polytope $\mathcal{P}_{R_0^2}^1$ and with blue the polytope $\mathcal{P}_{R_0^2}^k$. Note that the centers of the balls forming the intersection of balls $\mathcal{Q}_{R_0^2}^k$ associated to $\mathcal{P}_{R_0^2}^k$ does not contain $C_0$ in their convex hull.

\begin{figure}[h]
  \includegraphics[width = 8cm]{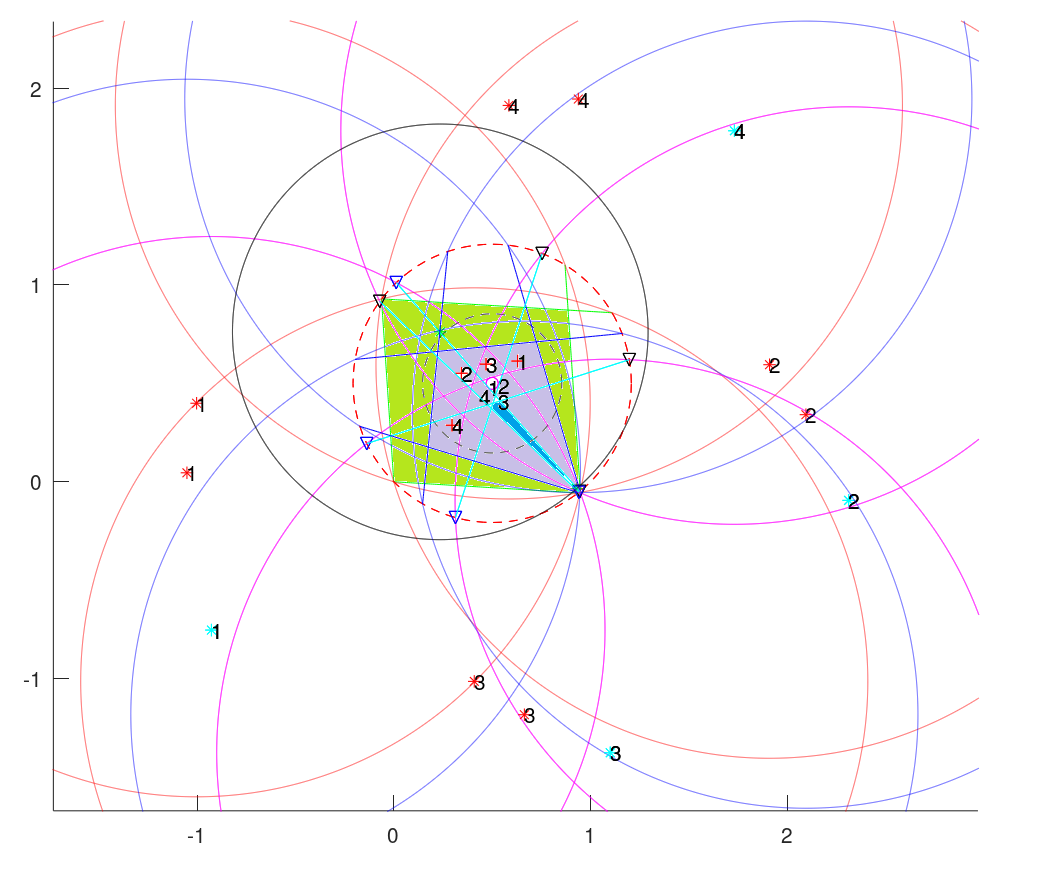}
  \caption{ A perturbed cube example. Note that the centers of the balls (with green stars) forming the intersection of balls $\mathcal{Q}_{R_0^2}^k$ associated to $\mathcal{P}_{R_0^2}^k$ (with blue) do not contain $C_0$ in their convex hull.}
  \label{fig4}
\end{figure}

Finally, we performed a test with higher dimensions. For $n = 100$ we obtained our solution and compared it with the correct solution. In Figure \ref{fig5} one can see the difference between their entries.   

\begin{figure}[h]
  \includegraphics[width = 8cm]{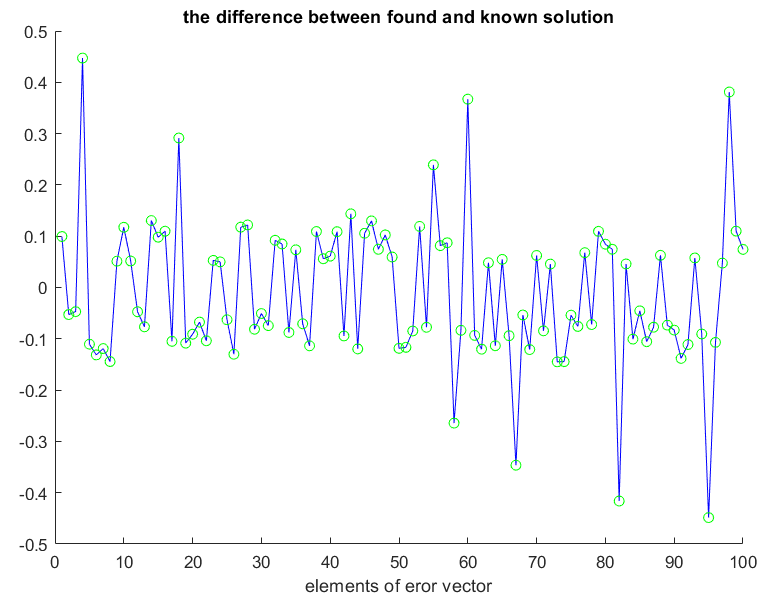}
  \caption{ The obtained error for each entry in the obtained solution. The dimension here is $n  = 100$. Our implementation uses the ellipsoid algorithm pedagogically implemented locally for some sub-procedures. Improvements can be made. }
  \label{fig5}
\end{figure}

\section{Conclusion}
This paper we analyzed the problem of maximizing the distance to a given point $C_0$ over a polytope $\mathcal{P}$, under the aiding hypothesis that a circumscribing ball is known whose boundary contain the vertices of $\mathcal{P}$ the farthest to $C_0$. As such we construct an arbitrary good approximation  of the polytope with an intersection of balls which preserves the vertices of the polytope $\mathcal{P}$ on the boundary of the circumscribing ball. 

Some results regarding the maximization of distances over intersection of balls are now used from \cite{funcos1} and \cite{funcos2} to motivate the construction of a new polytope which preserves the maximizers of the previous problem. From this polytope we construct again an intersection of ball and the process continues. We present a lemma which shown, that this process will eventually stop, in a finite number of steps, with an intersection of balls over which we can maximize the distance to the given point $C_0$. We show that this obtained distance is a nontrivial upper bound of the solution to the initial problem. We say, nontrivial since a trivial bound to the maximum distance over $\mathcal{P}$ to $C_0$ can be obtained ba simply maximizing the distance to $C_0$ over the ball which circumscribes $\mathcal{P}$. We also presented numerical results: several pedagogical 2-d examples and one instance in which a problem in dimension $n = 100$ was solved. The used problem for the numerical results is maximizing the distance over the unit hypercub to a random point inside the unit hypercube. We chose  this problem because it is possible to compute the distance by a known method and as such have something to compare our result against. 

\section{Future Work: a sequence of balls with decreasing radius }
As a continuation of this work, we propose the following. Given a polytope $\mathcal{P}$, a point $C_{0,1} \in \mathbb{R}^n$ and $R>0$ with $\mathcal{P} \subseteq \mathcal{B}(C_{0,1},R)$ one should investigate the possibility of obtaining a sequence of balls with decreasing radius containing $\mathcal{P}$. We manage to do so as follows: choose randomly $C_{0,2} \in \partial \mathcal{B}(C_{0,1},\rho)$ for some $\rho > 0$. 

From above, given a polytope included in a ball, note the possibility of obtaining an intersection of balls with the method described above (each ball, in the intersection of balls, boundary leaves the same imprint on the given circumscribing ball as a hyperplane defining the polytope facet). Denote by $Q$ the intersection of balls associated to $\mathcal{P}$. Note that:
\begin{enumerate}
\item unless $\mathcal{P}$ has a vertex on $\mathcal{B}(C_{0,1},R)$, then $\mathcal{P} \subsetneq \mathcal{Q} $ and the intersection of their ($\mathcal{P}$ and $\mathcal{Q}$) boundaries is empty.
\item the set $\mathcal{Q} \subseteq \mathcal{B}(C_{0,1},R)$ and their ($\mathcal{B}(C_{0,1},R)$ and $\mathcal{Q}$) boundaries do not have common elements.
\end{enumerate}

 We form the following sequence. Assuming $C_{0,2}$ is in the convex hull of the balls centers forming $\mathcal{Q}$ let 
\begin{align}
R_1 = \max_{x \in \mathcal{Q}} \|x - C_{0,2}\|
\end{align} and define $\mathcal{P}_{R_1^2}$ given by Theorem 1 in \cite{funcos1}. It is known that $\mathcal{P}_{R_1^2} \subseteq \mathcal{Q}$ and $\max_{x \in \mathcal{P}_{R_1^2}} \|x - C_{0,2}\| = \max_{x \in \mathcal{Q}} \|x - C_{0,2}\|$. Since $\mathcal{P}_{R_1^2} \subseteq \mathcal{B}(C_{0,1},R)$ associate to $\mathcal{P}_{R_1^2}$ an intersection of balls (with the explained method) $\mathcal{Q}_{R_1^2}^1$ then define $R_2 = \max_{x \in \mathcal{Q}_{R_1^2}^1} \|x - C_{0,2}\|$. Note that 
\begin{align}
R_1 = \max_{\mathcal{Q}}\|x - C_{0,2}\| &= \max_{\mathcal{P}_{R_1}^2} \|x - C_{0,2}\|\leq \nonumber \\
& \leq \max_{\mathcal{Q}_{R_1^2}^1}\|x - C_{0,2}\| = R_2
\end{align} As such, repeating the process as long as $C_{0,2}$ is in the convex hull of the balls forming $\mathcal{Q}_{R_{i}^2}^i$, one gets the sequence:
\begin{align}
R_k = \max_{\mathcal{Q}_{R_{k-1}^2}^{k-1}} \|x - C_{0,2}\| 
\end{align} with 
\begin{align}
R_1 \leq R_2 \leq \hdots \leq R_k
\end{align} According to Lemma \ref{C1} the process stops after a finite number of steps, say $k-1$. Therefore $C_{0,2}$ is no longer in the convex hull of the centers of the balls forming $\mathcal{Q}_{R_{k-1}^2}^{k-1}$ and one can compute $R_k$.  Let 
\begin{align}
\tilde{R} = \max_{x \in \mathcal{Q}_{\tilde{R}^2}^{k-1}} \|x - C_{0,2}\| 
\end{align} then $R_{k-1} < \tilde{R} < R_k$. Indeed, assuming $\tilde{R} \geq R_k$ then $R_k \leq \tilde{R} = \max_{x \in \mathcal{Q}_{\tilde{R}^2}^{k-1}}\|x - C_{0,2}\| <  \max_{x \in \mathcal{Q}_{R^2_{k-1}}^{k-1}}\|x - C_{0,2}\| = R_{k-1}<R_k$, which is a contradiction. Note the inequality is justified by the fact that since $\tilde{R} \geq R_k > R_{k-1}$ follows that $\mathcal{Q}_{\tilde{R}^2}^{k-1}  \subseteq  \mathcal{Q}_{R_{k-1}^2}^{k-1} $. Finally, $\tilde{R} \geq R_{k-1}$ because, otherwise, assuming that $\tilde{R} \leq R_{k-1}$ follows that $\mathcal{Q}_{R^2_{k-1}}^{k-1} \subseteq \mathcal{Q}_{\tilde{R}^2}^{k-1} $ hence 
\begin{align}
R_{k-1} > \tilde{R} = \max_{\mathcal{Q}_{\tilde{R}^2}^{k-1}} \|x - C_{0,2}\| >\max_{\mathcal{Q}_{\tilde{R}^2}^{k-1}} \|x - C_{0,2}\|  = R_{k} 
\end{align} which is a contradiction.

As such, since $R_0 := \max_{\mathcal{P}}\|x - C_{0,2}\| \leq \max_{\mathcal{Q}}\|x - C_{0,2}\| < \hdots < R_{k-1} \leq \tilde{R}$), now consider the ball $\mathcal{B}(C_{0,2},\tilde{R})$ with $\mathcal{P} \subseteq \mathcal{B}(C_{0,2},\tilde{R})$. 

Now, we have 
\begin{align}
\mathcal{P} \subseteq \mathcal{B}(C_{0,1},R) \cap \mathcal{B}(C_{0,2}, \tilde{R}) \subseteq \mathcal{B}(C_{0,1}^1, R_{1,1})
\end{align} where $\mathcal{B}(C_{0,1}^1, R_{1,1})$ is the smallest ball including $ \mathcal{B}(C_{0,1},R) \cap \mathcal{B}(C_{0,2}, \tilde{R})$. It is obvious that $R_{1,1} \leq R$. 

Letting $C_{0,2}^1 \in \partial \mathcal{B}(C_{0,1}^1,\rho)$ one can repeat the process with the circumscribing ball $\mathcal{B}(C_{0,1}^1,R_{1,1})$ to obtain
\begin{align}
\mathcal{P} \subseteq \mathcal{B}(C_{0,1}^1,R_{1,1}) \cap \mathcal{B}(C_{0,2}^1, \tilde{R}_{1}) \subseteq \mathcal{B}(C_{0,1}^2, R_{1,2})
\end{align} with $R_{1,2} < R_{1,1} < R$. As such, one obtains a the sequence of balls with the property:
 \begin{align}
\mathcal{P} \subseteq \mathcal{B}(C_{0,1}^p,R_{1,p}) \cap \mathcal{B}(C_{0,2}^p, \tilde{R}_{p}) \subseteq \mathcal{B}(C_{0,1}^{p+1}, R_{1,p+1})
\end{align}  with 
\begin{align}
R_{1,p} \leq R_{1,p-1} \leq \hdots \leq R_{1,1} \leq R
\end{align}

Since this sequence is monotonically decreasing and bounded below, if follows that it has a limit, say $R_3$. It is interesting to analyze the properties of this sequence of balls.

\end{document}